\documentclass{article}

\usepackage{arxiv}

\usepackage[T1]{fontenc}    
\usepackage[numbers]{natbib}
\usepackage{hyperref}       
\usepackage{url}            
\usepackage{booktabs}       
\usepackage{amsfonts, amsmath, amssymb}       
\usepackage{nicefrac}       
\usepackage{microtype}      
\usepackage{graphicx}

\usepackage{algorithm}
\usepackage{algpseudocode}
\usepackage{tikz}
\usepackage{standalone}
\usetikzlibrary{decorations.pathreplacing}
\usepackage{pgfplots}
\usepgfplotslibrary{dateplot}
\pgfplotsset{compat=1.18}
\usepackage{amsthm}

\newcommand{\mub}{\boldsymbol{\mu}}

\newcommand{\lambdab}{\boldsymbol{\lambda}}
\newcommand{\thetab}{\boldsymbol{\theta}}
\newcommand{\gammab}{\boldsymbol{\gamma}}
\newcommand{\varthetab}{\boldsymbol{\vartheta}}
\newcommand{\kappab}{\boldsymbol{\kappa}}
\newcommand{\R}{\mathbb{R}}
\newcommand{\Z}{\mathbb{Z}}
\newcommand{\E}{\mathbb{E}}

\newcommand{\alphab}{\boldsymbol{\alpha}}

\newtheorem{theorem}{Theorem}

\newtheorem{assumption}{Assumption}

\title{On the singularity of the Fisher Information matrix in the sine-skewed family on the $d$-dimensional torus}

\author{Emily Schutte \\
	University of Amsterdam \\ 
    The Netherlands\\
	\texttt{e.j.schutte@uva.nl} \\
	\And
	Sophia Loizidou \\
    University of Luxembourg\\
    Luxembourg \\
	\texttt{sophia.loizidou@uni.lu} \\
    \And
	Vincent Laheurte \\
    University of Luxembourg\\
    Luxembourg \\
	\texttt{vincent.laheurte@uni.lu} \\
}

\date{}

\renewcommand{\shorttitle}{On the singularity of the Fisher Information matrix in the sine-skewed family}

\begin{document}
\maketitle

\begin{abstract}
Skewed distributions are fundamental in modelling asymmetric data on the $d$-dimensional torus. 
In this context, asymmetry is introduced through the sine-skewing mechanism, {which is the only skewing mechanism that has been proposed on the hyper-torus in the literature}.
Some sine-skewed models are known to suffer from  a singular Fisher information matrix in the vicinity of symmetry, which poses a significant issue for inferential purposes. 
It is an open question  to determine for  which sine-skewed models Fisher information singularity occurs. 
In this paper, a general characterization of the class of models that exhibit this singularity is given in the general $d$-dimensional setting.
\end{abstract}

\keywords{Directional statistics \and Asymmetry \and Fisher information singularity}

\title{On the singularity of the Fisher Information matrix in the sine-skewed family on the $d$-dimensional torus} 

\section{Introduction}\label{sec: intro}

Several real-world datasets can naturally be represented as data on the $d$-dimensional torus.
Notable examples arise from 
the area of bioinformatics, particularly in the context of the protein folding prediction problem, where angles play a central role \citep{kato2025trivariatewrappedcauchycopula, mardia_multivariate_2008, mardia_protein_2007}. 
Other examples are for instance RNA data \citep{nodehi_estimation_2018} 
and the circadian clock of two different tissues of a mouse \citep{liu_phase_2006}.
Beyond the bioinformatics world, toroidal data arise in various domains: 
the direction of animal movement \citep{mastrantonio_modelling_2022}, 
morphological data from human neurons \citep{puerta_regularized_2015},
wind direction data
\citep{kato_distribution_2009} and 
directions of steepest descent before and after an earthquake \citep{rivest_decentred_1997}, to cite but these.

Many distributions have been proposed in the literature to model $d$-dimensional angular data.
On the circle ($d=1$), key distributions are the von Mises, the cardioid, the wrapped Cauchy and the wrapped Normal distributions, to name a few.
On the two-dimensional torus,  commonly used models are the bivariate wrapped Cauchy distribution \citep{kato_mobius_2015, kato_distribution_2009}, 
the bivariate von Mises distribution \citep{mardia_statistics_1975} and its submodels, the Sine \citep{singh_probabilistic_2002} and Cosine \citep{mardia_protein_2007} distributions. 
Considering higher dimensions, the trivariate wrapped Cauchy copula \citep{kato2025trivariatewrappedcauchycopula} can model three angles, while the multivariate wrapped normal \citep{Baba1981}
and the multivariate extensions of the Sine and Cosine distributions \citep{mardia_multivariate_2008} can be used for any dimension. For a general overview, we refer the reader to \cite{ley_modern_2017} and \cite{pewsey_recent_2021}.

A strong limitation of the above-mentioned models arises from their symmetric nature, which is not able to capture the asymmetric phenomena that can be observed in circular and toroidal data \citep{ameijeiras2022flexible, rad2022enhancing, HARVEY2024105450}.
Recognizing this  need on the circle, \cite{Umbach2009} and 
\cite{abe2011symmetric} took inspiration from the popular skew-normal distribution of \cite{Azzalini1985}
and  proposed  the  sine-skewed distributions on the circle. \cite{ameijeiras-alonso_sine-skewed_2022} extended this proposal to the $d$-dimensional torus, leading to a density of the form
\begin{align}\label{eq: sine_skewed_density}
    \thetab\mapsto f_{\mub, \lambdab}(\thetab; \varthetab) := f_0(\thetab - \mub; \varthetab)\left(1+\sum_{j=1}^{d} \lambda_j \sin (\theta_j-\mu_j)\right) 
\end{align}
subject to
$\sum_{j=1}^{d} |\lambda_j| \leq 1,$ where $f_0$ is a symmetric base density on the $d$-dimensional torus (like all those mentioned above), $\mub = (\mu_1 ,\ldots, \mu_d)^\intercal \in [-\pi, \pi)^d$ is the location parameter,   $\varthetab$ stands for any non-location parameters and $\lambdab  = (\lambda_1 ,\ldots, \lambda_d)^\intercal \in (-1,1)^d$ plays the role of the skewness parameter. 
For the rest of the paper, we omit $\varthetab$ from the notation for the sake of readability.

These skewing models respond to the need of asymmetric distributions and have increased the flexibility of distributions on the $d$-dimensional torus, yet this comes at a cost. Indeed, a major problem arises from the Fisher information matrix (FIM) of these models and the ensuing inferential challenges. On the real line, it has been observed by \cite{Azzalini1985} that the FIM for the skew-normal distribution becomes singular in the vicinity of symmetry, that is, when $\lambda = 0$.
A lot of literature has been dedicated to this issue, see in particular \cite{Hallin_singularity2012} for a thorough account.  
Unfortunately, this singularity also arises in the sine-skewed family of distributions as defined in \eqref{eq: sine_skewed_density}, which may come as no surprise since their construction follows the same principle as the skew-normal distribution of \cite{Azzalini1985}. 
\cite{ley_simple_2014} showed that, in the circular case, the FIM is singular in the vicinity of symmetry if and only if the sine-skewing transformation is applied to the von Mises distribution, hence no other symmetric density yields such a result. This fact is due to an unfortunate  collinearity between location and skewness scores.
 \cite{symmetry_test} note that in the two-dimensional case singularity is observed for the sine-skewed Cosine distribution.
This observation is to be expected, as the Cosine distribution is an extension of the von Mises model on the torus.
However, interestingly, the Sine distribution, which is a different two-dimensional extension of the same model, does not exhibit the same issue. 
This contrast highlights that extending the singularity results beyond the circle is a non-trivial task.

Such singularities are problematic because singularity of the FIM indicates lack of information for the parameters to be uniquely identifiable from the data. 
A major inferential consequence is that the asymptotic normality of the maximum likelihood estimator fails to hold. 
Moreover, when the FIM is non-singular the rate of convergence to the true parameter is $O(n^{-\frac{1}{2}})$.  
However, when the matrix is singular this rate can be significantly slower and the likelihood can also exhibit irregularities, such as bimodality \citep{Rotnitzky2000}.
This has severe implications on many statistical procedures that rely on the asymptotic normality property, such as hypothesis testing and construction of confidence intervals.
Singularity of the FIM is also closely related to the smeariness of (nonparametric) sample Fr\'echet means \citep{smeariness1}. 
Theorem 3 of \cite{smeariness2} shows that, under the assumption that the underlying density differs from the uniform distribution at $-\pi$ for the first time in its $k^\text{th}$ derivative there, the convergence rate of the intrinsic sample mean is $n^{-\frac{1}{2(k+1)}}$, which is slower than the previously mentioned $n^{-\frac{1}{2}}$ rate.

Given the lack of knowledge as to which sine-skewed distributions on the $d$-dimensional torus suffer from a FIM singularity, we fully answer this open question in this paper. 
In Section~\ref{sec: main_theorem} we provide a general form  of symmetric density functions on the hyper-torus that is satisfied if and only if
their sine-skewed version  admits a singular FIM in the vicinity of symmetry.
In Section~\ref{sec: examples} we consider some well-known distributions and study whether they satisfy the proposed characterization or not, hence suffer from FIM singularity. 
Lastly, we conclude the paper with a discussion in Section~\ref{sec: conclusion}.


\section{Fisher information  singularity for the $d$-dimensional torus} \label{sec: main_theorem}

In this section we establish the main result of our paper.
Considering model \eqref{eq: sine_skewed_density}, we require $f_0$ to be a component-wise $2\pi$-periodic, symmetric density around $\mub \in [-\pi, \pi)^d$. 
More formally, we require $f_0 \in \mathcal{F}$, where $\mathcal{F}$ is defined as
\begin{align*}\label{eq: def: F}
    \mathcal{F} := \left\{\begin{array}{ll}
& f_0(\thetab) > 0 \,\mbox{a.e.}\, \ \forall \thetab \in [-\pi, \pi)^d, \\
f_0(\thetab): & f_0(\thetab + 2\pi \boldsymbol{k}) = f_0(\thetab)\, \ \forall \boldsymbol{k} \in\Z^d,\\
& f_0(\mub + \thetab) = f_0(\mub -\thetab)\, \ \forall \thetab \in [-\pi, \pi)^d, \\
& \int_{[-\pi,\pi)^d} f_0(\thetab) {\mathrm d} \thetab = 1.
\end{array} \right\}.
\end{align*}
We also require the following mild assumption.
\begin{assumption}\label{assumption}
    The mapping $\thetab \mapsto f_0(\thetab)$ is differentiable on $[-\pi, \pi)^d$.
\end{assumption}
In Theorem~\ref{thm: result} we derive a general expression for symmetric densities which is valid if and only if the FIM of their resulting sine-skewed version is singular in the vicinity of symmetry.
The proof of the theorem is based on the fact that the FIM is singular if and only if the elements of the score function for location and skewness parameters are linearly dependent. 
Immediate calculations yield that the score function of \eqref{eq: sine_skewed_density} in the vicinity of symmetry corresponds to
\begin{equation} \label{eq: score}
    S_{f_0} 
    =  \left( 
         -\frac{\frac{\partial}{\partial \theta_1}f_0(\thetab-\mub)}{f_0(\thetab-\mub)},
        \dots,
        -\frac{\frac{\partial}{\partial \theta_d}f_0(\thetab-\mub)}{f_0(\thetab-\mub)},
        \sin(\theta_{1}-\mu_1),
        \dots,
        \sin(\theta_{d}-\mu_d) 
    \right)^\intercal.
\end{equation}
For notational convenience we focus on the partial derivative with respect to $\thetab$, using the fact that for $i \in \{1,\ldots,d\}$, $-\frac{\partial}{\partial \theta_i} \log f_0(\thetab - \mub) =\frac{\partial}{\partial \mu_i}\log f_0 (\thetab - \mub)$.
For the sake of illustration we calculate \eqref{eq: score} for $f_0(\thetab)$ being the Cosine density and
we drop $\mub$ from the notation for convenience.
The density is given by
\begin{equation}\label{eq: cosine_density}
        f_C(\thetab) = C_c \exp\left\{ \kappa_1 \cos(\theta_1) + \kappa_2 \cos(\theta_2) + \beta \cos(\theta_1 -\theta_2) \right\}
\end{equation}
    where $C_c$ is the appropriate normalizing constant and $\kappa_1, \kappa_2\geq0, \beta\in\R$ are the concentration parameters.
    It can be calculated that
    \begin{equation*}
        S_{f_{C}} = \left( \kappa_1 \sin(\theta_1) + \beta\sin(\theta_1-\theta_2), \kappa_2 \sin(\theta_2) - \beta\sin(\theta_1-\theta_2), \sin(\theta_1), \sin(\theta_2) \right)^\intercal
    \end{equation*}
    which has linearly dependent elements {as $\left\langle (1, 1, -\kappa_1, -\kappa_2)^\intercal, S_{f_{C}} \right\rangle = 0 $,
    where $\langle \cdot \, , \cdot \rangle$ denotes the inner product of vectors.}
    This confirms that the Cosine model suffers from a singular FIM in the vicinity of symmetry, as stated in \cite{symmetry_test}.

    Before providing the statement of the theorem we define the function
    \begin{equation} \label{eq: solution}
    h_0( \thetab {- \mub}) := f_0(\thetab {- \mub}) \exp \left( \sum^d_{i=1}\gamma_i \cos(\theta_i {- \mu_i}) \right)
    \end{equation}
    for $\gammab = (\gamma_1, \dots,\gamma_d)^\intercal \in \R^d$ and $\mub = (\mu_1 ,\ldots, \mu_d)^\intercal \in [-\pi, \pi)^d$.
    The proof of the theorem can be found in the Supplementary Material.

\begin{theorem}\label{thm: result}
Let $f_0\in\mathcal{F}$ satisfy Assumption~\ref{assumption}.
The FIM of the sine-skewed version of $f_0(\thetab {- \mub})$ in the vicinity of symmetry is singular if and only if there exists a vector $\alphab = \left( \alpha_1, \ldots, \alpha_d \right)^\intercal\in\R^d$ with $\alpha_i \neq 0$ for $i\in\{1,\ldots,d\}$ 
such that $h_0(\thetab {- \mub})$ defined in \eqref{eq: solution} 
satisfies
\begin{equation}\label{eq: condition}
    h_0(\thetab {- \mub} + t\alphab) = h_0(\thetab {- \mub})
\end{equation}
for all $t\in \mathbb{R}$ and $\thetab \in [-\pi,\pi)^d $.
\end{theorem}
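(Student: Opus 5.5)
The plan is to reduce singularity of the FIM to a linear dependence among the components of the score \eqref{eq: score}, and then to read that dependence as a first-order PDE for $h_0$.

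\textbf{Step 1: singularity as linear dependence.} The FIM at $\lambdab=\mathbf 0$ is $\E[S_{f_0}S_{f_0}^\intercal]$. It is singular iff there is a nonzero $(\boldsymbol a,\boldsymbol b)\in\R^{2d}$ with $\langle(\boldsymbol a,\boldsymbol b),S_{f_0}\rangle=0$ almost surely. Since $f_0>0$ a.e., this is the same as Lebesgue-a.e. on the torus, and by continuity of the derivatives it holds everywhere. Writing $\boldsymbol\phi=\thetab-\mub$, the relation reads
\begin{equation*}
\sum_{i=1}^d a_i\,\partial_{\phi_i}\log f_0(\boldsymbol\phi)=\sum_{i=1}^d b_i\sin\phi_i .
\end{equation*}
Here $\boldsymbol a\neq\mathbf 0$: if $\boldsymbol a=\mathbf 0$, then $\sum_i b_i\sin\phi_i\equiv0$, which forces $\boldsymbol b=\mathbf 0$, since the functions $\sin\phi_i$ are linearly independent.

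\textbf{Step 2: absorb the sine terms into $h_0$.} Since $\partial_{\phi_i}\gamma_i\cos\phi_i=-\gamma_i\sin\phi_i$, we have
\begin{equation*}
\sum_i a_i\,\partial_{\phi_i}\log h_0=\sum_i a_i\,\partial_{\phi_i}\log f_0-\sum_i a_i\gamma_i\sin\phi_i .
\end{equation*}
Take $\alphab=\boldsymbol a$ and choose $\gamma_i=b_i/a_i$ whenever $a_i\neq0$. The relation then becomes the directional-derivative equation $\langle\alphab,\nabla\log h_0\rangle=0$.

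Integrating along the line $t\mapsto\boldsymbol\phi+t\alphab$ gives $\frac{d}{dt}h_0(\boldsymbol\phi+t\alphab)=0$, hence \eqref{eq: condition}. Differentiability is supplied by Assumption~\ref{assumption}, and periodicity lets the line wrap around the torus. Conversely, if \eqref{eq: condition} holds, differentiating at $t=0$ gives $\langle\alphab,\nabla\log h_0\rangle=0$. Expanding this yields the nonzero vector $(\alphab,(\alpha_i\gamma_i)_i)$ orthogonal to $S_{f_0}$, so the FIM is singular.

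\textbf{Step 3 (main obstacle): coordinates with $a_i=0$.} If $a_i=0$ for some $i$, the relation contains $b_i\sin\phi_i$ but no derivative in $\phi_i$, so no choice of $\gamma_i$ absorbs it. The plan is to show that $b_i=0$ in that case, using symmetry and integration.

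Integrate the relation against $d\phi_i$ over $[-\pi,\pi)$ after multiplying by $f_0$. More precisely, compute $\int \left(\sum_j a_j\partial_j f_0\right)\sin\phi_i\,d\boldsymbol\phi$ by parts in $\phi_j$ for $j\neq i$. Since the $\phi_j$-derivative of $\sin\phi_i$ vanishes for $j\ne i$, periodicity makes this integral zero. Hence $b_i\,\E_{f_0}[\sin\phi_i\,\cdot\,\sin\phi_i]$ must be balanced by the remaining terms $b_k\,\E[\sin\phi_k\sin\phi_i]$. Doing this for every $i\in I_0=\{i:a_i=0\}$ gives $\Sigma_{I_0I_0}\boldsymbol b_{I_0}+\Sigma_{I_0,I_0^c}\boldsymbol b_{I_0^c}=0$, where $\Sigma=\E[\sin\boldsymbol\phi\sin\boldsymbol\phi^\intercal]$ is positive definite. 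This alone does not force $\boldsymbol b_{I_0}=0$, so I expect this step to need care.

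A cleaner route is to handle the "$\alpha_i\neq0$ for all $i$" requirement in the statement differently. Either show that a relation with some $a_i=0$ can be perturbed, or else read the statement's condition as being on the effective coordinates. I would first try to prove directly that the relation restricted to the span of $\{\phi_j:a_j\ne0\}$ forces $b_i\sin\phi_i$ to be a function killed by the derivation $\sum_j a_j\partial_j$. Such a function is constant along lines in direction $\boldsymbol a$, which do not move $\phi_i$, so $b_i\sin\phi_i$ is compatible with being a function of $\phi_i$ alone. The risk is therefore that the argument yields only a weaker condition, and I would fall back on the integration identity to conclude $b_i=0$ in the symmetric setting.
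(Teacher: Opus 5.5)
Your Steps 1--2 are the paper's proof: singularity of the FIM is rewritten as a linear relation among the score components, the sine terms are absorbed into $h_0$ via $\gamma_i=b_i/a_i$, and $h_0$ is then shown to be constant along the characteristic lines $\boldsymbol\phi+t\alphab$. The paper never treats coordinates with $a_i=0$. It writes $\gamma_i=\beta_i/\alpha_i$ ``for $\alpha_i\neq0$'' and concludes as if every $\alpha_i$ were nonzero, so you have found the real weak point. However, your Step 3 is left open, and neither of the fixes you sketch works as stated.

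\textbf{Showing $b_i=0$ when $a_i=0$.} This claim is true, but your $f_0$-weighted integration against $\sin\phi_i$ only gives $(\Sigma\boldsymbol b)_{I_0}=0$. Integrate against Lebesgue measure instead.
\begin{itemize}
\item Let $J=\{j:a_j\neq0\}$ and $\tilde h=f_0\exp\bigl(\sum_{j\in J}(b_j/a_j)\cos\phi_j\bigr)$.
\item Then $\sum_{j\in J}a_j\partial_j\tilde h=\tilde h\,R$, where $R=\sum_{i\notin J}b_i\sin\phi_i$ depends only on $\boldsymbol\phi_{J^c}$.
\item Fix $\boldsymbol\phi_{J^c}$ and integrate over $\boldsymbol\phi_J\in[-\pi,\pi)^{|J|}$. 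The left side vanishes by periodicity. The right side equals $R\int\tilde h\,d\boldsymbol\phi_J$, and this integral is positive for a.e.\ $\boldsymbol\phi_{J^c}$.
\item Hence $R\equiv0$, so $b_i=0$ for $i\notin J$.
\end{itemize}
Equivalently, along a characteristic line $f_0$ would pick up a factor $e^{tR}$, which contradicts the boundedness of $f_0$.

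\textbf{Zero entries of $\alphab$ cannot be removed.} The argument above still leaves an $\alphab$ with zero entries, and no perturbation can fix that. Read literally (all $\alpha_i\neq0$), the ``only if'' direction of the statement is false.
\begin{itemize}
\item Take $d=2$ and $f_0(\boldsymbol\phi)\propto(1+2\rho\cos\phi_1)e^{\kappa\cos\phi_2}$ with $0<\rho<1/2$ and $\kappa>0$.
\item The $\mu_2$-score is $\kappa\sin\phi_2$, which is collinear with the $\lambda_2$-score, so the FIM is singular.
\item For any $\gammab$, $h_0=G(\phi_1)H(\phi_2)$. Invariance along $\alphab$ with $\alpha_1\alpha_2\neq0$ forces $\alpha_1(\log G)'=-\alpha_2(\log H)'$ to be constant, hence zero by periodicity.
\item So $(1+2\rho\cos\phi_1)e^{\gamma_1\cos\phi_1}$ would have to be constant. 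Evaluating at $\phi_1=0,\pi/2,\pi$ gives $(1+2\rho)e^{\gamma_1}=1$ and $(1-2\rho)e^{-\gamma_1}=1$, so $1-4\rho^2=1$, a contradiction.
\end{itemize}

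What can actually be proved is the characterization with $\alphab\neq\mathbf 0$: entries may vanish, and $\gamma_i$ is arbitrary wherever $\alpha_i=0$. Your Steps 1--2 plus the Lebesgue-integration argument above prove exactly this version, which is what your fallback ``effective coordinates'' reading amounts to. The paper's proof needs the same correction.

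A minor point: Assumption~\ref{assumption} gives differentiability but not continuity of $\nabla f_0$. Your ``a.e.\ implies everywhere'' step therefore needs more justification than you give. It is also safer to work with $f_0$ rather than $\log f_0$, since $f_0>0$ only a.e.
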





{In other words, the FIM of the sine-skewed version of $f_0(\thetab)$ will be singular in the vicinity of symmetry if and only if it can be written as
\begin{equation*}
    f_0( \thetab - \mub) = h_0(\thetab - \mub) \exp \left( -\sum^d_{i=1}\gamma_i \cos(\theta_i - \mu_i) \right)
\end{equation*}
for $h_0(\thetab)$ satisfying \eqref{eq: condition}.}
This result fully characterizes which symmetric densities $f_0$, combined with the sine-skewing mechanism, suffer from FIM singularities in the vicinity of symmetry and which not, hence completely solves this open question. 
In the next section, we apply it on several well-known distributions from the literature in order to provide an overview of occurring FIM singularities. 
This result is very important as it informs researchers  about when inferential procedures work as usual and when problems occur, as explained in the Introduction.

The result of Theorem~\ref{thm: result} is in fact more generally valid as it holds for any skewing mechanism that has score vector \eqref{eq: score} in the vicinity of symmetry. 
As stated in \cite{symmetry_test}, a skewing mechanism of the form 
\begin{align*}\label{eq: sine_skewed_density_new_proposal}
    \thetab\mapsto  f_0(\thetab {- \mub}) \prod_{j=1}^{d} \left(1 + \lambda_j \sin (\theta_j {- \mu_j})\right),
\end{align*}
subject to $|\lambda_j| \leq 1, j = 1,\ldots,d$, has the same score function, and thus suffers from singularity issues.

A different skewing mechanism was proposed by \cite{Bekker2022} on the 2-dimensional torus.
The proposed model is of the form
\begin{equation*}
    \thetab\mapsto  C_{m, f_0} f_0(\theta_1 {- \mu_1}, \theta_2 {- \mu_2})\left(\frac{1 + \lambda_1 \sin (\theta_1 {- \mu_1}) + \lambda_2 \sin (\theta_2 {- \mu_2})}{2}\right) ^m
\end{equation*}
for a normalizing constant $C_{m, f_0}$, a positive integer $m$ {and location parameter $\mub = (\mu_1 , \mu_2)^\intercal \in [-\pi, \pi)^2$}.
The score function of this skewing mechanism, in the vicinity of symmetry, is given by
\begin{equation*}
    \left(
        -\frac{\frac{\partial}{\partial \theta_1}f_0(\thetab {- \mub})}{f_0(\thetab {- \mub})},
        -\frac{\frac{\partial}{\partial \theta_2}f_0(\thetab {- \mub})}{f_0(\thetab {- \mub})},
        m\sin(\theta_{1} {- \mu_1}),
        m\sin(\theta_{2} {- \mu_2}) 
        \right)^\intercal.
\end{equation*}
Thus, the result of Theorem~\ref{thm: result} applies as well to this skewing mechanism, by rescaling $\gamma_i$ by $\gamma_i/m$.

\section{Examples of known distributions} \label{sec: examples}
In this section, we investigate which distributions {can be written in the form} \eqref{eq: solution} for a function $h_0(\thetab)$ that satisfies \eqref{eq: condition}. 
In other words, we will go through some of the best-known symmetric models from the literature and discuss which ones are prone to FIM singularity under sine-skewing and which not.
As mentioned in the Introduction, the von Mises distribution on the circle admits a singular FIM, hence we focus on its extensions. 
{For simplicity, we drop the location parameter $\mub$ from the notation.}
\begin{itemize}
    \item \textbf{product of independent von Mises distributions:} The density is given by
    \begin{equation*}
        f_{ivM}(\thetab) = \left(\prod_{i=1}^d \frac{1}{ 2\pi I_0(\kappa_i)}\right) \exp \left( \sum_{i=1}^d \kappa_i \cos(\theta_i) \right)
    \end{equation*}
    with concentration parameters $\kappa_i$ for $i=1,\ldots,d$.
    Setting $\gamma_i = -\kappa_i$, it holds that
    \begin{align*}
        & h_0(\thetab) = \prod_{i=1}^d \frac{1}{ 2\pi I_0(\kappa_i)} ,
    \end{align*}
    which clearly satisfies \eqref{eq: condition} since the function is a constant, so the sine-skewed version of $f_{ivM}(\thetab)$ suffers from a singular FIM in the vicinity of symmetry.

    \item \textbf{Sine distribution:} The density is given by
    \begin{equation*}
        f_S(\thetab) = C_S \exp\left\{ \kappa_1 \cos(\theta_1) + \kappa_2 \cos(\theta_2) + \beta \sin(\theta_1)\sin(\theta_2) \right\}
    \end{equation*}
    with normalizing constant $C_S$ and concentration parameters $\kappa_1, \kappa_2 \geq0$, $\beta\in\R$. 
    It holds that $f_S(\thetab)$ can be written in the form \eqref{eq: solution} for $\gamma_1 = -\kappa_1$, $\gamma_2 = -\kappa_2$ and 
    \begin{equation*}
        h_0(\thetab) = C_S \exp\left\{  \beta \sin(\theta_1) \sin(\theta_2) \right\}.
    \end{equation*}
    In order to calculate $\alpha_1, \alpha_2$, we can set $t=1$ and solve $h_0(\thetab) = h_0(\thetab + \alphab)$ to obtain $\alpha_1 = \alpha_2 = 2\pi k$ for $k\in\Z \setminus \{0\}$.
    However, for $t=\frac{1}{4}$, $h_0(\thetab + \frac{1}{4} \alphab) = C_S \exp\left\{  \beta \cos(\theta_1) \cos(\theta_2) \right\} \neq h_0(\thetab)$.
    So, $h_0(\thetab)$ does not satisfy \eqref{eq: condition} and thus $f_S(\thetab)$ does not suffer from a singular FIM in the vicinity of symmetry.

    \item \textbf{Cosine distribution:} The density is given by \eqref{eq: cosine_density}.
    It holds that $f_C(\thetab)$ can be written in the form \eqref{eq: solution} for $\gamma_1 = -\kappa_1$, $\gamma_2 = -\kappa_2$ and 
    \begin{align*}
        & h_0(\thetab) = C_c \exp\left\{ \beta \cos(\theta_1 -\theta_2) \right\}.
    \end{align*}
    It is easy to see that for $\alpha_1 = \alpha_2 = 1$, $h_0(\thetab+\alphab t) = C_c \exp\left\{ \beta \cos(\theta_1 + t -\theta_2 - t) \right\} = h_0(\thetab)$ and thus $f_C(\theta_1, \theta_2)$ suffers from a singular FIM in the vicinity of symmetry.

    \item \textbf{multivariate von Mises distribution:}
Denoting $s(\thetab) = \left( \sin\theta_1, \ldots, \sin\theta_d \right)^\intercal$ and similarly $c(\thetab) = \left( \cos\theta_1, \ldots, \cos\theta_d \right)^\intercal$, \cite{mardia_multivariate_2008} proposes the following multivariate extensions of the Sine and Cosine models:
\begin{itemize}
    \item \textbf{multivariate extension of Sine distribution:} The density is given by
\begin{equation*}
    f_{mvS}(\thetab) = C_{mvS} \exp \left\{ \kappab^\intercal c(\thetab) + \frac{1}{2} s(\thetab)^\intercal \Lambda s(\thetab) \right\}
\end{equation*}
for normalizing constant $C_{mvS}$, $\kappab\in\R^d$ and symmetric matrix $\Lambda\in\R^{d\times d}$, 
with $(\Lambda)_{ii} = 0$ for $i\in\{1,\ldots,d\}$.
{It holds that $f_{mvS}(\thetab)$ can be written in the form \eqref{eq: solution} for $\gammab = -\kappab$ and 
    \begin{equation*}
        h_0(\thetab) = C_{mvS} \exp \left\{ \frac{1}{2} s(\thetab)^\intercal \Lambda s(\thetab) \right\}.
    \end{equation*}
Similar calculations as for the Sine model lead to the conclusion that $h_0(\thetab)$ does not satisfy \eqref{eq: condition}, and thus $f_{mvS}(\thetab)$ does not suffer from a singular FIM in the vicinity of symmetry.}
     \item \textbf{multivariate extension of Cosine distribution:} The density is given by
\begin{equation}\label{eq: mv_cosine}
    f_{mvC}(\thetab) = C_{mvC} \exp \Bigl( \kappab^\intercal c(\thetab) -  s(\thetab)^\intercal \Delta s(\thetab) - c(\thetab)^\intercal \Delta c(\thetab) \Bigr)
\end{equation}
for normalizing constant $C_{mvC}$, $\kappab =(\kappa_1, \ldots, \kappa_d)^\intercal \in \R^d$ and symmetric matrix $\Delta\in\R^{d\times d}$, $(\Delta)_{ij} = \delta_{ij} = \delta_{ji}$, $\delta_{ii} = 0$ for $i,j\in\{1,\ldots,d\}$. 
{Using the trigonometric identity 
\begin{equation*}
    \cos(\theta_i - \theta_j) = \cos(\theta_i) \cos(\theta_j) + \sin(\theta_i)\sin(\theta_j)
\end{equation*}
for $i\neq j$,} 
\eqref{eq: mv_cosine} can equivalently be written as
\begin{equation*}
    f_{mvC}(\thetab) = C_{mvC} \exp \left( 
    \sum_{i=1}^d \kappa_i \cos(\theta_i) - \sum_{j=1}^d \sum_{\substack{k=1 \\ k \neq j}}^d \delta_{jk} \cos(\theta_k - \theta_j) \right)
\end{equation*}
which can be written in the form \eqref{eq: solution} for $\gammab = -\kappab$ and
\begin{align*}
    h_0(\thetab) = C_{mvC} \exp \left( - \sum_{j=1}^d \sum_{ \substack{k=1 \\ k \neq j}}^d \delta_{jk} \cos(\theta_k - \theta_j) \right).
\end{align*}
\eqref{eq: condition} is satisfied for $\left( \alpha_1, \ldots, \alpha_d \right)^\intercal = \left( 1, \ldots, 1 \right)^\intercal$ and thus $f_{mvC}(\thetab)$ suffers from a singular FIM in the vicinity of symmetry.

\end{itemize}
\item \textbf{bivariate wrapped Cauchy distribution:}  The density is given by 
\begin{align*}
    f_{bwC} (\thetab) = c \{c_0 & - c_1\cos(\theta_1) - c_2\cos(\theta_2)  - c_3\cos(\theta_1)\cos(\theta_2) - c_4\sin(\theta_1)\sin(\theta_2) \}^{-1}
\end{align*}
for $c,c_0,c_1,c_2,c_3,c_4$ as given in \cite{kato_mobius_2015}. The density
$f_{bwC}(\thetab)$ can be written in the form \eqref{eq: solution} for some $\gammab \in \R^2$ and 
\begin{align*}
    h_0(\thetab) = f_{bwC} (\thetab) \exp \left( \sum^2_{i=1}\gamma_i \cos(\theta_i) \right).
\end{align*}
It can be shown that $h_0(\thetab)$ does not satisfy \eqref{eq: condition}, and thus $f_{bwC}(\thetab)$ does not suffer from a singular FIM in the vicinity of symmetry. The same reasoning applied to the trivariate wrapped Cauchy distribution of \cite{kato2025trivariatewrappedcauchycopula} leads to the same conclusion.

\end{itemize}
The conclusions for all models considered are as expected, and formally confirm the statement of \cite{symmetry_test} for the Sine and Cosine models.
Interestingly, we can observe some similarity between our results and the conclusions of \cite{Hallin_singularity2012}.
When considering a subclass of skewing functions $\Pi(\cdot)$ on the real line, they proved that only the Gaussian distribution admits a singular FIM in the vicinity of symmetry.
On the circle the equivalent of the Gaussian distribution is considered to be the von Mises, for whose extensions on the torus we prove that there are similar issues.

\section{Discussion} \label{sec: conclusion}

In this paper, we derive a general expression characterizing the class of symmetric models that suffer from the problem of singular FIM when combined with the sine-skewing mechanism.
As highlighted in the Introduction, singularity of the FIM is highly undesirable due to its negative impact on inferential procedures.

One approach addressed in the literature to tackle this problem involves the use of reparameterization techniques to eliminate such singularities. 
An important example is 
the parametrization introduced by \cite{HallinLey2014} based on the Gram–Schmidt orthogonalization in the space of scores. 
The core idea behind this method is to orthogonalize the score vector in order to make its components linearly independent.
However, it is often the case that reparameterization leads to a loss of interpretability, which is one of the original strengths of the sine skewing approach. 
Therefore, a different interesting path towards a solution would be to construct alternative skewing mechanisms that do not suffer from the same singularity issues. 

Some different skewing models have been proposed in the literature for circular distributions, see for example \cite{miyata2024}. 
However, no other skewing mechanism has been proposed, to the best of our knowledge, for the $d$-dimensional torus. 
Although the formulation of a skewing mechanism that does not suffer from singularity issues lies beyond the scope of this paper, it can be a direction for future research. \\

\appendix

\section{Proof of Theorem 1}\label{appendix}

{For the proof of the theorem we write the density as $f_0(\thetab)$,  dropping $\mub$ from the notation for convenience.
All results hold if we replace $\thetab$ by $\thetab - \mub$ (or equivalently $\theta_i$ by $\theta_i - \mu_i$ for $i=1,\ldots,d$).
}

\begin{proof}
The FIM $I_{f_0}$ for any symmetric distribution $f_0$ subject to sine-skewing is given by
\begin{equation*}
    I_{f_0} = \E \left[S_{f_0} S_{f_0}^\intercal \right],
\end{equation*}
with $S_{f_0}$ the notation for the score function evaluated in the vicinity of symmetry. The matrix
$I_{f_0}$ is singular if and only if the components of $S_{f_0}$ are linearly dependent.
The score function of \eqref{eq: sine_skewed_density} in the vicinity of symmetry corresponds to \eqref{eq: score}.
Therefore, the FIM is singular if and only if there exist real coefficients 
$\alpha_1,\ldots,\alpha_d,\beta_1,\ldots,\beta_d$ not all equal to 0 satisfying 
\begin{equation*} \label{eq: initial_form_pde}
    - \sum_{i=1}^d \alpha_i \frac{\frac{\partial}{\partial \theta_i}f_0(\thetab)}{f_0(\thetab)} + \sum_{i=1}^d \beta_i \sin(\theta_{i}) = 0,
\end{equation*}
which can be re-written as
\begin{equation}\label{eq: pde_to_be_solved}
    \sum_{i=1}^d \alpha_i \frac{\partial}{\partial \theta_i}f_0(\thetab) = 
    f_0(\thetab) \sum_{i=1}^d \beta_i \sin(\theta_{i}).
\end{equation}
Our goal is to solve the partial differential equation \eqref{eq: pde_to_be_solved} with the initial condition $\thetab=\thetab_0$, using the method of characteristics (for more details see \cite{pdes_book}). The transport structure of this PDE leads to a family of curves, known as the characteristic curves, along which the PDE \eqref{eq: pde_to_be_solved} reduces to an ODE. Those curves are classically defined by integrating the transport coefficients, and are here given by
\begin{align*}
\frac{d}{dt} \varphi(t,\thetab_{0}) = \alphab.
\end{align*}
Combining this with the initial condition $\varphi(0,\thetab_0)=\thetab_0$, the characteristic curves are the straight lines given by
\begin{align*}
    \varphi(t,\thetab_{0}) & = \thetab_{0} + t\alphab.
\end{align*}

\noindent Next, in order to simplify the PDE \eqref{eq: pde_to_be_solved} by absorbing its right-hand side, we define 
\begin{equation*}
   h_0(\thetab) = f_0(\thetab) \exp \left( \sum^d_{i=1} \gamma_i \cos (\theta_i)\right),  
\end{equation*}

\noindent where $\gamma_i = \frac{\beta_i}{\alpha_i}$ {for $\alpha_i \neq 0$}. 
Solving \eqref{eq: pde_to_be_solved} is equivalent to solving the homogeneous PDE
\begin{align}
    & \sum^d_{i=1} \alpha_i \frac{\partial h_0(\thetab)}{\partial \theta_i}\notag \\
    & = \sum^d_{i=1} \alpha_i \frac{\partial f_0(\thetab)}{\partial \theta_i}\cdot  \exp \left( \sum^d_{i=1} \gamma_i\cos (\theta_i)\right) - f_0(\thetab) \sum^d_{i=1} \beta_i \sin(\theta_i) \cdot  \exp \left( \sum^d_{i=1} \gamma_i\cos (\theta_i)\right)\notag\\
    & = 0.\label{eq: pde_to_be_solved h}
\end{align}
We then derive the simple ODE verified by the solution of this transport problem along the characteristic curves:
\begin{equation*}
     \frac{\mathrm{d} h_0(\varphi(t,\thetab_0))}{\mathrm{d} t } = \sum^d_{i=1}\frac{\partial h_0}{\partial\theta_i}(\varphi(t,\thetab_0))\cdot \frac{\partial (\thetab_{0} +t \alphab)}{\partial t}= \sum^d_{i=1} \alpha_i \frac{\partial h_0}{\partial \theta_i}(\varphi(t,\thetab_0)).
\end{equation*}
Reminding its equivalent formulation \eqref{eq: pde_to_be_solved h}, the PDE \eqref{eq: pde_to_be_solved} is true if and only if for all $t,\thetab_0$, $$\frac{\mathrm{d}h_0(\varphi(t,\thetab_0))}{\mathrm{d}t}=0,$$namely if $h_0$ is constant along each characteristic curve.\\ This holds if and only if there exist $\alpha_i \neq 0$ 
such that for all $t\in \mathbb{R}$ and $\thetab_0 \in [-\pi,\pi)^d$, 
    $h_0(\thetab_0 + t\alphab) = h_0(\thetab_0).$


\end{proof}

\noindent \textbf{Acknowledgements:} The authors would like to thank Christophe Ley for constructive discussions and valuable feedback.
\\

\noindent \textbf{Funding}:
Sophia Loizidou was supported by the grant PRIDE/21/16747448/MATHCODA from the Luxembourg National Research Fund.


 \bibliographystyle{elsarticle-harv} 
 \bibliography{references}

@article{mardia_multivariate_2008,
	title = {A {Multivariate} {Von} {Mises} {Distribution} with {Applications} to {Bioinformatics}},
	volume = {36},
	issn = {0319-5724},
	number = {1},
	urldate = {2023-12-14},
	journal = {The Canadian Journal of Statistics / La Revue Canadienne de Statistique},
	author = {Mardia, Kanti V. and Hughes, Gareth and Taylor, Charles C. and Singh, Harshinder},
	year = {2008},
	pages = {99--109},
}

@article{mardia_protein_2007,
	title = {{Protein Bioinformatics and Mixtures of Bivariate von {Mises} Distributions for Angular Data}},
	volume = {63},
	issn = {0006-341X},
	language = {eng},
	number = {2},
	journal = {Biometrics},
	author = {Mardia, Kanti V. and Taylor, Charles C. and Subramaniam, Ganesh K.},
	month = {jun},
	year = {2007},
	pmid = {17688502},
	pages = {505--512},
}

@article{singh_probabilistic_2002,
	title = {Probabilistic {Model} for {Two} {Dependent} {Circular} {Variables}},
	volume = {89},
	issn = {0006-3444},
	number = {3},
	urldate = {2023-06-19},
	journal = {Biometrika},
	author = {Singh, Harshinder and Hnizdo, Vladimir and Demchuk, Eugene},
	year = {2002},
	pages = {719--723},
}

@article{kato_mobius_2015,
	title = {A {M\"obius} transformation-induced distribution on the torus},
	volume = {102},
	journal = {Biometrika},
volume={102},
pages={359--370},
	author = {Kato, Shogo and Pewsey, Arthur},
	month = {may},
	year = {2015},
}

@article{abe2011symmetric,
  title={Symmetric circular models through duplication and cosine perturbation},
  author={Abe, Toshihiro and Pewsey, Arthur},
  journal={Computational Statistics \& Data Analysis},
  volume={55},
  number={12},
  pages={3271--3282},
  year={2011},
  publisher={Elsevier}
}

@article{pewsey_recent_2021,
	title = {Recent advances in directional statistics},
	volume = {30},
	issn = {1863-8260},
	language = {en},
	number = {1},
	urldate = {2023-11-13},
	journal = {TEST},
	author = {Pewsey, Arthur and García-Portugués, Eduardo},
	month = {mar},
	year = {2021},
	pages = {1--58},
}

@incollection{puerta_regularized_2015,
	address = {Cham},
	title = {Regularized {Multivariate} von {Mises} {Distribution}},
	volume = {9422},
	isbn = {978-3-319-24597-3 978-3-319-24598-0},
	language = {en},
	urldate = {2023-11-08},
	booktitle = {Advances in {Artificial} {Intelligence}},
	publisher = {Springer International Publishing},
	author = {Rodriguez-Lujan, Luis and Bielza, Concha and Larrañaga, Pedro},
	year = {2015},
	note = {Series Title: Lecture Notes in Computer Science},
	pages = {25--35},
}

@misc{nodehi_estimation_2018,
	title = {Estimation of {Multivariate} {Wrapped} {Models} for {Data} in {Torus}},
	urldate = {2023-11-08},
	publisher = {arXiv},
	author = {Nodehi, Anahita and Golalizadeh, Mousa and Maadooliat, Mehdi and Agostinelli, Claudio},
	month = {nov},
	year = {2018},
	note = {{arXiv}:1811.06007},
}

@article{mastrantonio_modelling_2022,
	title = {The {Modelling} of {Movement} of {Multiple} {Animals} that {Share} {Behavioural} {Features}},
	volume = {71},
	issn = {0035-9254},
	number = {4},
	urldate = {2023-11-08},
	journal = {Journal of the Royal Statistical Society Series C: Applied Statistics},
	author = {Mastrantonio, Gianluca},
	month = {aug},
	year = {2022},
	pages = {932--950},
}

@article{ley_simple_2014,
	title = {SIMPLE OPTIMAL TESTS FOR CIRCULAR REFLECTIVE SYMMETRY ABOUT A SPECIFIED MEDIAN DIRECTION.},
	volume = {24},
	issn = {1017-0405},
	number = {3},
	urldate = {2023-10-03},
	journal = {Statistica Sinica},
	author = {Ley, Christophe and Verdebout, Thomas},
	year = {2014},
	pages = {1319--1339},
}

@article{mardia_statistics_1975,
	title = {{Statistics of Directional Data}},
	volume = {37},
	journal = {Journal of the Royal Statistical Society: Series B (Methodological)},
	author = {Mardia, Kanti V},
	year = {1975},
	pages = {349--371},
}

@book{ley_modern_2017,
	address = {Boca Ratón, Florida},
	title = {Modern {Directional} {Statistics}},
	isbn = {978-1-4987-0664-3},
	publisher = {Chapman and Hall/CRC Press},
	author = {Ley, C. and Verdebout, T.},
	year = {2017},
}

@article{rivest_decentred_1997,
	title = {A decentred predictor for circular-circular regression},
	volume = {84},
	journal = {Biometrika},
	author = {Rivest, L.-P.},
	year = {1997},
	pages = {717--726},
}

@article{kato_distribution_2009,
	title = {A distribution for a pair of unit vectors generated by {Brownian} motion},
	volume = {15},
	journal = {Bernoulli},
	author = {Kato, Shogo},
	year = {2009},
	pages = {898--921},
}

@article{liu_phase_2006,
	title = {Phase analysis of circadian-related genes in two tissues},
	volume = {7},
	journal = {BMC Bioinformatics},
	author = {Liu, D. and Peddada, S. and Li, L. and Weinberg, C. R.},
	year = {2006},
	pages = {87},
}

@article{ameijeiras-alonso_sine-skewed_2022,
	title = {Sine-skewed toroidal distributions and their application in protein bioinformatics},
	volume = {23},
	issn = {1465-4644},
	number = {3},
	urldate = {2023-06-19},
	journal = {Biostatistics},
	author = {Ameijeiras-Alonso, Jose and Ley, Christophe},
	month = {jul},
	year = {2022},
	pages = {685--704},
}

@misc{kato2025trivariatewrappedcauchycopula,
      title={{The trivariate wrapped Cauchy copula}}, 
      author={Shogo Kato and Christophe Ley and Sophia Loizidou and Kanti V. Mardia},
      year={2025},
      eprint={2401.10824},
      archivePrefix={arXiv},
      primaryClass={stat.ME}, 
}

@article{Baba1981,
	author = {Baba, Y},
	journal = {Proceedings of the Institute of Statistical Mathematics},
	pages = {41--54},
	title = {Statistics of angular data: wrapped normal distribution model (in {Japanese})},
	volume = {28},
	year = {1981}}

@misc{symmetry_test,
      title={Construction of optimal tests for symmetry on the torus and their quantitative error bounds}, 
      author={Andreas Anastasiou and Christophe Ley and Sophia Loizidou},
      year={2025},
      eprint={2510.06055},
      archivePrefix={arXiv},
      primaryClass={math.ST},
}

@book{pdes_book,
author = {Novozhilov, Artem},
year = {2023},
month = {05},
pages = {},
title = {Undergraduate course in PDE (Spring 2023 version)}
}

@article{HallinLey2014,
  author    = {Marc Hallin and Christophe Ley},
  title     = {{Skew-symmetric distributions and Fisher information: The double sin of the skew-normal}},
  journal   = {Bernoulli},
  volume    = {20},
  number    = {3},
  pages     = {1432--1453},
  year      = {2014}
}

@article{miyata2024,
  title={An extension of sine-skewed circular distributions},
  author={Miyata, Yoichi and Shiohama, Takayuki and Abe, Toshihiro},
  journal={arXiv preprint arXiv:2402.09788},
  year={2024}
}

@incollection{bekker2022,
  title={{Generalized Skew-Symmetric Circular and Toroidal Distributions}},
  author={Bekker, Andriette and Nakhaei Rad, Najmeh and Arashi, Mohammad and Ley, Christophe},
  booktitle={Directional Statistics for Innovative Applications: A Bicentennial Tribute to Florence Nightingale},
  pages={161--186},
  year={2022},
  publisher={Springer}
}

@incollection{ameijeiras2022flexible,
  title={{Flexible Circular Modeling: A Case Study of Car Accidents}},
  author={Ameijeiras-Alonso, Jose and Crujeiras, Rosa M},
  booktitle={Directional Statistics for Innovative Applications: A Bicentennial Tribute to Florence Nightingale},
  pages={93--116},
  year={2022},
  publisher={Springer}
}

@article{rad2022enhancing,
  title={{Enhancing wind direction prediction of South Africa wind energy hotspots with Bayesian mixture modeling}},
  author={Rad, Najmeh Nakhaei and Bekker, Andriette and Arashi, Mohammad},
  journal={Scientific Reports},
  volume={12},
  number={1},
  pages={11442},
  year={2022},
  publisher={Nature Publishing Group UK London}
}

@article{HARVEY2024105450,
title = {Modelling circular time series},
journal = {Journal of Econometrics},
volume = {239},
number = {1},
pages = {105450},
year = {2024},
issn = {0304-4076},
author = {Andrew Harvey and Stan Hurn and Dario Palumbo and Stephen Thiele}
}

@article{Umbach2009,
title = {Building asymmetry into circular distributions},
journal = {Statistics \& Probability Letters},
volume = {79},
number = {5},
pages = {659-663},
year = {2009},
issn = {0167-7152},
author = {Dale Umbach and S. Rao Jammalamadaka}
}

@article{Azzalini1985,
  author  = {Azzalini, Adelchi},
  title   = {{A Class of Distributions Which Includes the Normal Ones}},
  journal = {Scandinavian Journal of Statistics},
  volume  = {12},
  pages   = {171--178},
  year    = {1985}
}

@article{Hallin_singularity2012,
author = {Marc Hallin and Christophe Ley},
title = {{Skew-symmetric distributions and Fisher information – a tale of two densities}},
volume = {18},
journal = {Bernoulli},
number = {3},
publisher = {Bernoulli Society for Mathematical Statistics and Probability},
pages = {747--763},
year = {2012},
}

@article{Rotnitzky2000,
author = {Andrea Rotnitzky and David R. Cox and Matteo Bottai and James Robins},
title = {{Likelihood-Based Inference with Singular Information Matrix}},
volume = {6},
journal = {Bernoulli},
number = {2},
publisher = {Bernoulli Society for Mathematical Statistics and Probability},
pages = {243 -- 284},
year = {2000},
}

@article{smeariness1,
author = {Shayan Hundrieser and Benjamin Eltzner and Stephan Huckemann},
title = {{Finite sample smeariness of Fréchet means with application to climate}},
volume = {18},
journal = {Electronic Journal of Statistics},
number = {2},
publisher = {Institute of Mathematical Statistics and Bernoulli Society},
pages = {3274 -- 3309},
year = {2024},
}

@article{smeariness2,
  title={Intrinsic means on the circle: uniqueness, locus and asymptotics},
  author={Hotz, Thomas and Huckemann, Stephan},
  journal={Annals of the Institute of Statistical Mathematics},
  volume={67},
  number={1},
  pages={177--193},
  year={2015},
  publisher={Springer}
}







\end{document}